\documentclass[12pt,reqno]{amsart}
\usepackage{amssymb}

\usepackage{amscd}

\newcommand{\RNum}[1]{\uppercase\expandafter{\romannumeral #1\relax}}


\usepackage{graphicx}



\usepackage[T2A]{fontenc}
\usepackage[utf8]{inputenc}
\usepackage[russian,english]{babel}
\input{int.def}

\usepackage{mdwlist}

\usepackage{hyperref}
\usepackage{blindtext}
\usepackage{scrextend}

\usepackage{tikz}
\usetikzlibrary{matrix,arrows,calc}
\usepackage{tikz-cd}
\usepackage{graphicx}
\usepackage{caption}
\usepackage{subcaption}
\usepackage{wrapfig}

\usepackage{amsmath}

\usepackage{xcolor}
\usepackage{array}

\usepackage{tikz-cd}
\usetikzlibrary{cd}

\usepackage{comment}
\usepackage{enumitem}

\numberwithin{equation}{section}

\myitemmargin 
\baselineskip =15.0pt plus 2.5pt


\def\arg{\sf{Arg}}

\def\rp{\mathbb{RP}}

\def\aut{\sf{Aut}}
\def\Gr{\sf{Gr}}
\def\rk{\sf{rk}}

\def\sia{$\sigma$-anti-invariant}

\def\remark{\stateskip\noindent{\bf Remark. \;}} 

\usepackage{geometry}

\newgeometry{vmargin={25mm}, hmargin={15mm,19mm}}   

\usepackage{titlesec}
\titleformat{\section}[runin]{\bfseries}{\thesection.}{3pt}{}[.]

\usepackage{mathabx,graphicx}


\begin{document}

\myitemmargin

\title[
Infinitely many non-isotopic real symplectic forms 
on $S^2 \times S^2$
]%
{Infinitely many non-isotopic real symplectic forms 
on $S^2 \times S^2$}

\author{Gleb Smirnov}


\dedicatory{}


\begin{abstract}
Let $(S^2,\omega)$ be a symplectic sphere, and let $\tau \colon S^2 \to S^2$ be an anti-symplectic involution of 
$(S^2,\omega)$. We consider the product $(S^2,\omega) \times (S^2,\omega)$ endowed with the anti-symplectic involution $\tau \times \tau$, and study the space of monotone anti-invariant symplectic forms on this four-manifold. We show that this space is disconnected. 
In addition, during the course of the proof, we produce a 
diffeomorphism of $\Gr(2,4)$ which induces the identity map on all homology and homotopy groups, but which is not homotopic to the identity.
\end{abstract}

\maketitle



\setcounter{section}{0}
\section{Introduction}Is there a closed four-manifold $X$ and a cohomology 
class $\xi \in \sfh^2(X;\rr)$ such that the space 
$\Omega_{\xi}$
of symplectic forms of class $\xi$ is connected?
This uniqueness problem up to isotopy for cohomologous symplectic forms is completely open in dimension four, though disconnected examples are known in higher dimensions, see Problem 2 and Theorem 9.7.4 in \cite{McD-Sal}.
This short note concerns the following modified version of this problem.

A real symplectic 4-manifold\footnote[1]{A somewhat unfortunate term, but let's follow 
\cite{W,Kh-Sh}.} is a triple $(X,\sigma,\omega)$, where $\omega$ is a symplectic form on $X$ and $\sigma \colon X \to X$ is an involution which 
is anti-symplectic
\[
\sigma^{*} \omega = -\omega.
\]
Pick a class $\xi \in \sfh^2(X;\rr)$ such that 
$\sigma^{*} \xi = - \xi$ and let 
$\rr\Omega_{\xi}$ denote 
the space of those symplectic forms on $X$ which are $\sigma$-anti-invariant and are in the class $\xi$. A natural question to ask is: are there any examples of 
disconnected spaces 
$\rr\Omega_{\xi}$?

Let us introduce a couple of simple examples of real symplectic manifolds.
Consider the real ruled quadric $X_1$ defined as
\begin{equation}\label{eqX1}
X_1 := \left\{
\boldsymbol{x} \in \cp^3\,|\, x_0^2 + x_1^2 = x_2^2 + x_3^2
\right\}.
\end{equation}

The set of real points of $X_1$ is a $2$-torus in $\rp^3$ 
which is doubly ruled by real 
projective lines. Being a smooth projective variety, the surface $X_1$ 
inherits a K{\"a}hler form $\omega$ from the ambient space $(\cp^3, \Omega_{st})$. 
Here $\Omega_{st}$ stands for the Fubini-Study $2$-form.
After rescaling $\omega$, we assume it is monotone meaning that
\[
[\omega] =  -K_{X_1} \in \sfh^2(X_1;\zz),
\]
where $K_{X_1}$ is the canonical class of $X_1$.
The complex conjugation $\sigma \colon \cp^3 \to \cp^3$, $\sigma(x_i) = \bar{x}_i$ is an anti-symplectic involution w.r.t. to $\Omega_{st}$. 
If a projective hypersurface is cut out by a real polynomial then it is preserved by $\sigma$ and hence itself carries an anti-symplectic involution, 
namely 
$(\sigma|_{X_1})^{*}\omega = -\omega$. In this anti-invariant setting, we answer to the question of uniqueness in the negative proving
\begin{thma}\label{thm1}
The space of monotone anti-invariant symplectic 
forms on $X_1$ has infinitely many 
connected components.
\end{thma}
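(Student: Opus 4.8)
\medskip

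\noindent\textbf{Sketch of a proof.}
By rescaling, the space of monotone $\sigma$-anti-invariant forms deformation retracts onto the space $\rr\Omega_{-K_{X_1}}$ of $\sigma$-anti-invariant forms in the monotone class, so it suffices to show $\rr\Omega_{-K_{X_1}}$ has infinitely many components. Write $\omega_0$ for the monotone K\"ahler form, $J_0$ for the integrable complex structure of the quadric, and $T:=X_1^{\sigma}$ for the real locus. The torus $T$ is Lagrangian; since the intersection form of $\sfh_2(X_1;\qq)$ is nondegenerate and $[T]\cdot[\omega_0]=0$, the class $[T]$ vanishes in $\sfh_2(X_1;\zz)$. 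All components will be produced by iterating one diffeomorphism. First, an equivariant Moser argument: along a path in $\rr\Omega_{-K_{X_1}}$ the cohomology class is constant, and the Moser primitive can be averaged over $\langle\sigma\rangle$ to be $\sigma$-anti-invariant, so the Moser isotopy runs through $\sigma$-equivariant diffeomorphisms. Hence for $\sigma$-equivariant $\phi,\psi$ the forms $\phi^{*}\omega_0$ and $\psi^{*}\omega_0$ lie in the same component of $\rr\Omega_{-K_{X_1}}$ \iff $\psi=\phi\,h\,g$ with $h$ a $\sigma$-equivariant symplectomorphism of $\omega_0$ and $g$ in the identity component of $\mathrm{Diff}^{\sigma}(X_1)$.

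By the equivariant tubular-neighbourhood theorem a neighbourhood of $T$ is $T\times D^{2}$ with $\sigma(p,v)=(p,-v)$ (the normal bundle of the Lagrangian $T$ is trivial). Fix a non-separating circle $c\subset T$ lying in the slope of the real locus of a ruling sphere of $\omega_0$, and let $\phi$ be the Dehn twist of $T$ along $c$, damped to the identity in the $D^{2}$-directions. Since the twisting is tangent to $T$ while $\sigma$ is $-\mathrm{id}$ normally, $\phi$ commutes with $\sigma$; being supported near the null-homologous torus $T$, it acts as the identity on $\sfh_{*}(X_1)$; and $\phi|_{T}$ is a Dehn twist, of infinite order in the mapping class group of $T$. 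Thus the $(\phi^{n})^{*}\omega_0$, $n\in\zz$, are monotone $\sigma$-anti-invariant forms in the class $-K_{X_1}$. (One may further symmetrize $\phi$ to commute with the free anti-symplectic involution $\rho$ for which $X_1/\rho\cong\Gr(2,4)$; the descended diffeomorphism of $\Gr(2,4)$ is then homologically and homotopically trivial, and the byproduct advertised in the abstract is that it is nevertheless \emph{not} homotopic to the identity.)

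To separate components, attach to each $\omega\in\rr\Omega_{-K_{X_1}}$ the following data $\nu(\omega)$. Pick an $\omega$-tame almost complex structure $J$ for which $\sigma$ is anti-holomorphic (a nonempty connected space). By McDuff's analysis of $S^{2}\times S^{2}$ each ruling class $F_{1},F_{2}$ is represented by a unique embedded $J$-sphere through every point — no bubbling occurs, $S^{2}\times S^{2}$ being minimal with $\omega(F_{i})$ the minimal positive area — and since $\sigma_{*}F_{i}=F_{i}$ and $\sigma$ is $J$-anti-holomorphic, the ruling spheres through points of $T$ are $\sigma$-invariant, each cut by $\sigma$ in a circle of $T$. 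So $F_{1},F_{2}$ give two transverse circle foliations of $T$, i.e. an ordered pair $\nu(\omega)$ of primitive classes forming a basis of $\sfh_{1}(T;\zz)$ (two such circles through a common point meet once transversally). Gromov compactness and connectedness of the admissible $J$'s make $\nu$ locally constant on $\rr\Omega_{-K_{X_1}}$, and taking $J=(\phi^{n})^{*}J_{0}$ one computes $\nu((\phi^{n})^{*}\omega_0)=(\phi|_{T})^{-n}\,\nu(\omega_0)$. If $(\phi^{n})^{*}\omega_0$ and $(\phi^{m})^{*}\omega_0$ were in the same component, the reduction gives $(\phi^{\,n-m})_{*}=h_{*}$ on $\sfh_{1}(T;\zz)$ for a $\sigma$-equivariant symplectomorphism $h$ of $\omega_0$ (the identity-component factor $g$ restricts to a diffeomorphism of $T$ isotopic to the identity, so is invisible on $\sfh_1$). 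But $h$ permutes the two ruling slopes of $\omega_0$ on $T$, so $h_{*}$ is a signed permutation matrix, whereas $(\phi^{\,n-m})_{*}$ is a shear of infinite order unless $n=m$. Hence the $(\phi^{n})^{*}\omega_0$ lie in pairwise distinct components, and $\rr\Omega_{-K_{X_1}}$ has infinitely many.

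\emph{Main obstacle.} The analytic heart is to make $\nu$ a genuine deformation invariant: one must run the $\sigma$-equivariant ruling-curve theory for $S^{2}\times S^{2}$ — existence, uniqueness, absence of bubbling, and, crucially, continuity of the real loci as embedded circles — uniformly in one-parameter families of anti-invariant monotone forms; the minimality of $S^{2}\times S^{2}$ and the extremal area of $F_{i}$ are exactly what makes this feasible. The second, subtler point is topological rather than analytic: the twist $\phi$ is smoothly very tame (identity on homology, and non-equivariantly one expects it isotopic to the identity), so the whole phenomenon is equivariant, and pinning down the equivariant isotopy class of $\phi$ — equivalently, producing and certifying the non-trivial diffeomorphism of $\Gr(2,4)$ — is where the obstruction-theoretic work lies.
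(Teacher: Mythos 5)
Your invariant is exactly the paper's invariant --- the classes in $\sfh_1$ of the real torus cut out by the $\sigma$-invariant ruling spheres of an equivariant $J$ --- and your separation argument is sound. The fatal gap is that the diffeomorphism $\phi$ you construct does not exist. Any diffeomorphism of $X_1$ supported in a tubular neighbourhood $N\cong T\times D^2$ of $T$ and preserving $T$ must act trivially on $\sfh_1(T;\zz)$: being the identity near $\partial N$, and since $\sfh_1(\partial N)\to\sfh_1(N)$ is surjective, it induces the identity on $\sfh_1(N)\cong\zz^2$; as $T\hookrightarrow N$ is a homotopy equivalence, its restriction to $T$ is then trivial on $\sfh_1(T)$ --- in particular it is not a Dehn twist. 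Concretely, ``damping the twist in the $D^2$-direction'' requires a path in $\diff(T^2)$ from the identity to the Dehn twist, which does not exist (the twist is a nontrivial element of $SL(2,\zz)$): in the local model $(\theta,t,v)\mapsto(\theta+\rho(|v|)\,\beta(t),t,v)$ on $c\times[-1,1]\times D^2$ the map fails to glue to the identity at the end $t=1$ of the twisting annulus unless $\rho(|v|)$ is an integer. So your family $(\phi^{n})^{*}\omega_0$ is never produced.

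This is precisely why the paper's diffeomorphism is global rather than supported near the real locus $L$: it is the fiberwise rotation $f(z,w)=(z\,e^{i m\,\arg(w)},w)$ of the \emph{entire} ruling spheres over an annulus around the equator $|w|=1$, extended over the two polar disks of $\cp^1(w)$ using $\pi_1(\diff_{+}(S^2))=\zz_2$ (whence the restriction to even $m$). The nontrivial shear induced on $\sfh_1(L;\zz)$ is paid for by a homotopically nontrivial loop of rotations of the fibers far from $L$, not by anything compactly supported near $L$. A secondary slip: $[T]\cdot[\omega_0]=0$ together with nondegeneracy of the intersection form does not imply $[T]=0$; one also needs $[T]^2=0$ (Lagrangian torus), or the observation that $\sigma_{*}=-\id$ on $\sfh_2(X_1;\zz)$ while fixing $[T]$. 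The remaining ingredients of your sketch --- the equivariant Moser argument, connectedness of the space of equivariant almost complex structures, Gromov's fibrations and the circles they cut on $T$, and local constancy of the resulting invariant --- coincide with the paper's proof.
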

\noindent
A weaker statement applies to the quadric $X_2$ defined as
\begin{equation}\label{eqX2}
X_2 := \left\{
\boldsymbol{x} \in \cp^3\,|\, x_0^2 + x_1^2 + x_2^2 + x_3^2 = 0
\right\}.
\end{equation}
In this case we have:
\begin{thma}\label{thm2}
The space of monotone anti-invariant symplectic 
forms on $X_2$ has at least two connected components.
\end{thma}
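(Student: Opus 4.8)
\emph{Topological normalisation.} The plan is to normalise the equivariant topology, reduce disconnectedness of $\rr\Omega_\xi$ to the existence of a single ``invisible'' equivariant diffeomorphism, and build that diffeomorphism explicitly. Writing $\boldsymbol{x}=\mbfu+i\mbfv$ with $\mbfu,\mbfv\in\rr^4$, the equation $x_0^2+\dots+x_3^2=0$ says exactly that $|\mbfu|=|\mbfv|\neq 0$ and $\langle\mbfu,\mbfv\rangle=0$; sending such a point to the oriented span of $(\mbfu,\mbfv)$ identifies $X_2$ with the manifold $\widetilde{\Gr}(2,4)$ of oriented $2$-planes in $\rr^4$, so $X_2\cong S^2\times S^2$, while the conjugation $\sigma$ becomes reversal of the plane's orientation. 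Hence $\sigma$ is free (as it must be, $X_2$ having no real points), it is the antipodal involution on each $S^2$-factor, $X_2\to X_2/\sigma=\Gr(2,4)$ is the universal (orientation) double cover, and $\sigma^{*}=-\id$ on $\sfh^2(X_2;\rr)$; in particular every form in $\rr\Omega_\xi$ represents the single class $\xi=-K_{X_2}$.

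\emph{Reduction.} A $\sigma$-equivariant Moser argument shows that $\omega_0,\omega_1\in\rr\Omega_\xi$ lie in the same component iff some $\sigma$-equivariant diffeomorphism that is $\sigma$-equivariantly isotopic to the identity carries $\omega_1$ to $\omega_0$. Since symplectic forms on $S^2\times S^2$ in a fixed cohomology class are all isotopic, and since Gromov's theorem $\mathrm{Symp}(S^2\times S^2,\omega)\simeq(SO(3)\times SO(3))\rtimes\zz/2$ -- with $\sigma$ central in the relevant isometry group -- would force every $\sigma$-commuting symplectomorphism acting trivially on $\sfh^2$ to be $\sigma$-equivariantly isotopic to the identity, the theorem reduces to producing a \emph{single} $\sigma$-equivariant diffeomorphism $f$ of $X_2$ which acts trivially on $\sfh^{*}$ but is not $\sigma$-equivariantly homotopic to the identity. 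Indeed, for such an $f$, were $f^{*}\omega$ joinable to $\omega$ inside $\rr\Omega_\xi$, then $f$ would be $\sigma$-equivariantly isotopic to a $\sigma$-commuting symplectomorphism, hence $\sigma$-equivariantly homotopic to the identity -- a contradiction.

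\emph{Construction of $f$.} I would take $f$ to be a fibrewise rotation. The first projection $X_2=S^2\times S^2\to S^2$ is $\sigma$-equivariant with $\sigma$ antipodal on the base, so it descends to an $S^2$-bundle $\Gr(2,4)\to\rp^2$. For a smooth $g\colon S^2\to SO(3)$ with $g(-x)=g(x)$ -- i.e.\ a map $\bar g\colon\rp^2\to SO(3)$ -- the diffeomorphism $f(x,y)=(x,g(x)\cdot y)$ is $\sigma$-equivariant and acts trivially on $\sfh^{*}(X_2)$. Now $[\rp^2,SO(3)]=[\rp^2,\rp^3]$ has a non-trivial element (restrict to the generator of $\pi_1(\rp^3)$ on the $1$-skeleton; it extends over the $2$-cell because twice that generator is null), yet $g$ itself is null-homotopic as a map $S^2\to SO(3)$ since $\pi_2(SO(3))=0$. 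Thus $f$ is homotopic to the identity non-equivariantly but -- this is the point -- not $\sigma$-equivariantly. Passing to the quotient, $\bar f$ is a self-diffeomorphism of $\Gr(2,4)$ inducing the identity on every homology and homotopy group (the one non-automatic check, triviality of $\bar f$ on $\pi_2(\Gr(2,4))=\zz^2$, follows from triviality of $f$ on $\sfh_2(X_2)$) that is not homotopic to the identity -- the diffeomorphism announced in the introduction.

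\emph{Main obstacle.} The hard part is the final claim of the construction: that the non-triviality of $\bar g\in[\rp^2,SO(3)]$ really persists as non-triviality of $\bar f$ in $\pi_0$ of the full self-mapping space of $\Gr(2,4)$ (equivalently, of $\sigma$-equivariant self-maps of $S^2\times S^2$) -- i.e.\ that no homotopy, not required to respect the fibration over $\rp^2$, can undo it. This is an obstruction-theory computation in the Postnikov tower of that mapping space, and is subtle precisely because $\bar f$ is invisible to homology, homotopy groups, and characteristic classes. A secondary point, needed in the reduction, is an equivariant version of Gromov's theorem. Finally, the reason one obtains only ``at least two'' components here -- in contrast to the infinitely many of \refthm{thm1} -- is that the obstruction group in play is finite, morally $[\rp^2,SO(3)]=\zz/2$, whereas the Lagrangian torus $\sf{Fix}(\sigma)$ available in \refthm{thm1} carries a $\zz$'s worth of distinctions.
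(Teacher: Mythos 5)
Your architecture and your construction are essentially those of the paper: the identification of $X_2$ with the oriented Grassmannian so that $\sigma$ is the antipodal map on each factor and $X_2/\sigma\cong\Gr(2,4)$; the reduction, via equivariant Moser plus an equivariant form of Gromov's theorem, to producing one $\sigma$-equivariant diffeomorphism acting trivially on homology but not $\sigma$-equivariantly homotopic to the identity (the paper proves the equivariant Gromov statement as a separate lemma, using the connectedness of the space of anti-invariant compatible almost-complex structures and the fact that $\diff_{+}(S^2,\tau)\simeq\SO(3)$ is connected); and the diffeomorphism itself --- the paper's $f(\mib x,\mib y)=(-\mib x+2\langle\mib x,\mib y\rangle\,\mib y,\;\mib y)$ is exactly your fibrewise rotation, by angle $\pi$ about the axis through $\pm\mib y$, representing the nontrivial class of $[\rp^2,\SO(3)]$.

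The genuine gap is the step you yourself flag as the ``main obstacle'' and then do not carry out: proving that the descended map $g$ of $\Gr(2,4)$ is not homotopic to the identity (equivalently, that $f$ is not equivariantly homotopic to the identity). Gesturing at ``an obstruction-theory computation in the Postnikov tower of the mapping space'' is not an argument, and --- as you observe --- every invariant you name (action on homology, on homotopy groups, characteristic classes of $Z$) is blind to $g$, so it is far from clear that such a computation terminates in your favour; without this step, $\omega$ and $f_{*}\omega$ could a priori lie in the same component and the theorem is unproved. The paper's resolution is a concrete characteristic-class argument on the mapping torus $\sf{T}_{\!g}=Z\times[0,1]/\{(z,1)\sim(g(z),0)\}$: Stiefel--Whitney classes of closed manifolds are homotopy invariants (Wu), so $g\simeq\id$ would force $w_3(\sf{T}_{\!g})=w_3(\sf{T}_{\!\id})=w_3(Z)=0$. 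But $g$ fixes pointwise the image $Q\cong\rp^2$ of the diagonal while acting by $-\id$ on its normal bundle $N_{Q/Z}\cong T_{Q}$, so $\sf{T}_{\!g}$ contains a copy of $Q\times S^1$ whose normal bundle is $T_{Q}\otimes L_{S^1}$, and the Whitney formula gives
\[
w_3(\sf{T}_{\!g})\big|_{Q\times S^1}=w_1(\rp^2)^2\,w_1(L_{S^1})\neq 0 .
\]
This mapping-torus obstruction (or some equally explicit substitute) is the missing ingredient you would need to supply to complete the proof.
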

To prove both theorems, concrete representatives of different connected components 
of $\rr \Omega_{(-K_{X_1})}$ 
(for abbreviation, we let 
$\rr \Omega_{K}$ stand for 
$\rr\Omega_{(-K_{X_1})}$) will be presented; these forms will be related by a diffeomorphism and so belong to the same connected component of the moduli space of $\sigma$-anti-invariant forms. 
It is not uninteresting to compare Theorems \ref{thm1} and \ref{thm2} with the recent results of Kharlamov and Shevchishin (see \cite{Kh-Sh}), who study real symplectic 
4-manifolds up to the equivalence relation generated by deformations and diffeomorphisms.
In particular, Theorem 1.1 of \cite{Kh-Sh} states that if a 
real \textit{rational} symplectic 4-manifold $(X,\sigma,\omega)$ is 
$\sigma$-minimal, then it is a real K{\"a}hler surface. 
We do not discuss the notion of $\sigma$-minimality but note that a minimal surface (e.g. $S^2 \times S^2$) is also $\sigma$-minimal.

Although there are countably many non-isomorphic complex structures on $S^2 \times S^2$, 
we stick to the one coming from the product $\cp^1 \times \cp^1$, for it is the only complex structure which admits a K{\"a}hler form in the anti-canonical class. 
A classical result is that there are exactly four different types of anti-holomorphic 
involutions on $\cp^1 \times \cp^1$ (see Lemma 1.16 in \cite{Koll}, 
in this note we only discuss the types $Q^{2,2}$ and $Q^{4,0}$.)
Given an underlying real algebraic surface $(X,\sigma)$ and a class $\xi \in \sfh^2(X;\rr)$, the 
space of $\sigma$-anti-invariant K{\"a}hler forms in class $\xi$ is convex.
It then follows from Moser's argument that, up to isotopy, there exists exactly one 
K{\"a}hler form in a given cohomology class. The classical Moser's trick 
considers a family of symplectic forms $\omega_t$ such that 
the cohomology class of $\omega_t$ is constant and provides a family of diffeomorphisms 
$\varphi_t$ such that $\varphi_t^{*} \omega_t = \omega_0$. 
The argument is easily adapted to the case of anti-invariant forms: if the family 
$\omega_t$ consists of $\sigma$-anti-invariant forms, then the derived isotopy 
$\varphi_t$ commute with $\sigma$.

Therefore, be given a pair of rational symplectic 4-manifolds 
$(X,\sigma,\omega)$ and $(X,\sigma',\omega')$ with $X$ diffeomorphic 
to $S^2 \times S^2$ and with both $\omega$ and $\omega'$ being monotone. 
Then $(X,\sigma,\omega)$ is diffeomorphic to $(X,\sigma',\omega')$ iff 
$\sigma$ is diffeomorphic to $\sigma'$. 
In other words, the natural mapping 
\[
\diff_{K}(X,\sigma) \to \rr\Omega_{K},\quad f \to f_* \omega
\]
is surjective. Here $\diff_{K}(X,\sigma)$ stands for the subgroup 
of those diffeomorphisms which preserve $K_{X} \in \sfh^2(X;\zz)$ 
and are $\sigma$-equivariant.
Therefore, the moduli space $\rr\Omega_{K}/\diff(X,\sigma)$ 
consists of a single point, yet, according to our claim, the space 
$\rr\Omega_{K}$ itself may have infinitely many connected components.
\medskip%

Another problem worth considering in this anti-invariant setting is to construct a pair of symplectic forms which are not deformation equivalent. 
Two (\sia) symplectic forms $\omega$ and $\omega'$ are said to be deformation equivalent 
if there exists a ($\sigma$-equivariant) diffeomorphism 
$\varphi$ such that $\varphi_{*} \omega'$ and $\omega$ are connected by a path of (\sia) symplectic forms. 
In general, without taking in account any involutions, 
inequivalent symplectic forms in dimension 4 have been obtained by 
McMullen and Taubes \cite{Mc-Taub} and later by 
Smith \cite{Sm} and Vidussi \cite{Vid}. It is not immediately clear 
how to construct similar examples in the presence of an anti-holomorphic involution. 
Note however that according to \cite{Kh-Sh}, one cannot produce such examples 
in the realm of real rational symplectic 4-manifolds. 

\state Acknowledgements.  
The author thanks Sewa Shevchishin for valuable conversations about the work in this paper and 
the referee for the prompt reply and useful comments. This work is funded by 
the ETH Zurich Postdoctoral Fellowship program.

\section{Proof of Theorem \ref{thm1}}
We now describe $X_1$ in a way that visibly exhibits its complex structure. There is a projective transformation 
sending equation \eqref{eqX1} to 
\[
y_0\, y_1 = y_2\, y_3,
\]
and transforming $\sigma \colon x_i \to \bar{x}_i$ into 
$\sigma \colon (y_0,y_1,y_2,y_3) \to (\bar{y}_1, \bar{y}_0,\bar{y}_3,\bar{y}_2)$. 
The rational functions
\[
z = \frac{y_2}{y_0},\quad  w = \frac{y_3}{y_0}
\]
define a biholomorphism from $X_1$ onto $\cp^1(z) \times \cp^1(w)$. 
In the inhomogeneous coordinates $(z,w)$, the map $\sigma$ takes the form
\begin{equation}\label{sigma1}
\sigma(z,w) = \left( \bar{z}^{-1}, \bar{w}^{-1} \right).
\end{equation}
Finally, the form $\omega$ on $X_1$ splits into a product form $\omega_{\cp^1} \oplus \omega_{\cp^1}$.

We now introduce an invariant which is capable to distinguish 
between some connected components of the space 
$\rr\Omega_{K}$ of anti-invariant monotone ($\xi = -K_{X_{1}}$) symplectic forms on $X_1$. To understand this invariant, it is the easiest to start with the product K{\"a}hler form $\omega$.
We let $L$ denote the fixed point set of $\sigma$, which is the product 
of the two copies of $\rp^1$ defined, respectively, by $|z| = 1$ and $|w| = 1$.
Pick a point $p$ on $L$ and observe that there is but one smooth complex sphere passing through $p$ for each of the generators $\sfh_2(X_1;\zz) \cong \zz(A) \oplus \zz(B)$. Denote these spheres by $C_{A}$ and $C_{B}$, respectively. Notice that the curves
\[
\gamma_{A} = C_{A} \cap L,\quad \gamma_{B} = C_{B} \cap L
\]
are transversally intersecting simple closed curves in $L$, which form 
a basis for $\sfh_1(L;\zz)$. Our invariant associates to 
$\omega \in \rr\Omega_{\xi}$ the class $[\gamma_{A}] \in \sfh_1(X;\zz)$. There is no natural choice for orientaion of $\gamma_{A}$, so we orient it somehow. To see $\gamma_{A}$ is indeed an invariant for connected components of $\rr\Omega_{K}$, 
we use the following observation of Gromov:
\begin{thma}[Gromov, 2.4.A$_1$ in \cite{Gr}]\label{thm:gro}
Let $(X,\omega)$ be $S^2 \times S^2$ endowed with a product monotone form.
Then, every $\omega$-compatible almost-complex structure $J$
defines two tranversal fibrations of $X$ into $J$-holomorphic spheres and these fibratons continuously (even smoothly) depend on $J$.
\end{thma}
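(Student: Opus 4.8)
The plan is to reconstruct the standard pseudoholomorphic-curves argument behind Gromov's statement. Write $\sfh_2(X;\zz)=\zz A\oplus\zz B$, with $A=[S^2\times\pt]$ and $B=[\pt\times S^2]$, so that $A^2=B^2=0$, $A\cdot B=1$ and $c_1(A)=c_1(B)=2$. Take the split complex structure $J_0$ coming from $\cp^1\times\cp^1$, for which the two rulings are honest holomorphic fibrations in classes $A$ and $B$, and join $J_0$ to the given $J$ by a path $(J_t)$ in the space $\calj_\omega$ of $\omega$-compatible almost complex structures, which is connected (indeed contractible).

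First I would show that for every $J\in\calj_\omega$ each of $A$ and $B$ is represented by a $J$-holomorphic sphere and that no such sphere can degenerate. The non-degeneration is the heart of the matter and rests on a Chern-number count. A nonconstant $J$-holomorphic sphere has positive $\omega$-area, so its underlying simple curve represents a class $\alpha=pA+qB$ with $p+q>0$; adjunction for a rational curve in a four-manifold gives $\alpha^2\geq c_1(\alpha)-2$, i.e. $(p-1)(q-1)\geq 0$, and in both resulting cases $c_1(\alpha)=2(p+q)\geq 2$, whence $c_1\geq 2$ for the (possibly multiply covered) class of the sphere as well. Since $c_1$ is additive over the components of a stable map and ghost components contribute nothing, a bubbled representative of $A$ (or of $B$) would have at least two nonconstant components and total first Chern number at least $4>2$ --- impossible; note also that $A$ and $B$ are primitive, so no single component can be a proper multiple cover. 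A standard open--closed argument along $(J_t)$, using Gromov compactness, then yields for every $t$ --- in particular for $J$ --- a $J_t$-holomorphic sphere in each of the classes $A$ and $B$, and by adjunction every such sphere is smooth and embedded.

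Next I would assemble these spheres into the two fibrations. In a four-manifold a $J$-holomorphic sphere $C$ with $C^2\geq-1$ is automatically regular, so the moduli space $\scrm_A(J)$ of unparametrized $J$-holomorphic spheres in class $A$ is a smooth surface, and it is compact by the previous step; by positivity of intersections any two distinct such spheres are disjoint. Form the universal curve $\scru_A(J)\to\scrm_A(J)$, a smooth four-manifold, and the evaluation map $\ev_A\colon\scru_A(J)\to X$. The normal bundle of each $C$ is the trivial line bundle $\scro$, so a nonzero normal section is nowhere zero; together with moving the marked point along $C$, this makes $\ev_A$ a local diffeomorphism. It is proper by compactness and injective by embeddedness together with disjointness, hence a diffeomorphism onto its image, which is open and closed in $X$ and therefore (as $X$ is connected) all of $X$. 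Thus $X\cong\scru_A(J)\to\scrm_A(J)$ is a smooth $S^2$-bundle; since $X$ is simply connected, $\scrm_A(J)\cong S^2$, and transporting the fibration through $\ev_A^{-1}$ realizes the fibration of $X$ into $J$-holomorphic spheres in class $A$. The same argument with $B$ gives the second fibration, and transversality of the two is immediate: a fibre in class $A$ and a fibre in class $B$ meet, by positivity of intersections, in a set of points whose local multiplicities are all positive and sum to $A\cdot B=1$, so they meet in exactly one transverse point.

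Finally, smooth dependence on $J$: since regularity never fails and bubbling never occurs, everything above runs in families over $\calj_\omega$. The parametrized space $\{(J,C) : C \text{ is a } J\text{-holomorphic sphere in class } A\}$ is a smooth manifold fibering over $\calj_\omega$, its universal curve evaluates diffeomorphically onto $\calj_\omega\times X$, and reading off the base exhibits a smooth family of fibrations; likewise for $B$. The step I expect to need the most care is precisely the a-priori existence and the exclusion of bubbling via the Chern-number argument, since everything downstream is essentially formal once the relevant moduli spaces are known to be nonempty, compact and regular --- which is why Gromov records this as a basic structural fact rather than a theorem with an involved proof.
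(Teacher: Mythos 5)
The paper does not prove this statement at all: it is quoted verbatim as Gromov's theorem (2.4.A$_1$ of \cite{Gr}) and used as a black box, so there is no in-text argument to compare yours against. That said, your reconstruction is the standard proof and it is correct: the adjunction inequality $(p-1)(q-1)\ge 0$ together with positivity of area does force $c_1\ge 2$ on every class carrying a nonconstant $J$-holomorphic sphere, which kills bubbling and multiple covers in the primitive classes $A$ and $B$; automatic regularity for embedded spheres of square $\ge -1$ in dimension four makes the continuation from the split structure and the family version over $\calj_\omega$ work without genericity assumptions; and the evaluation-map argument assembling the moduli space into a fibration is exactly how this is usually done. The only step where you are implicitly leaning on a nontrivial input is the claim that a nonzero kernel element of the normal operator is nowhere vanishing: for non-integrable $J$ these are not literally holomorphic sections of $\scro_{\cp^1}$, and one needs the similarity principle (Hofer--Lizan--Sikorav) to know that zeros of solutions of the normal Cauchy--Riemann-type operator count positively, so that $c_1(N)=0$ forces them to be absent. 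With that caveat made explicit, your proof is complete and is precisely the argument the paper is invoking by citation.
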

\noindent
Therefore, for every $\omega$-compatible almost-complex structure 
$J$ there is but one $J$-holomorphic sphere $C_{A}$ in class $A$ passing through $p$.
\smallskip%

We let $\calj_{\omega}$ denote the space of $\omega$-compatible almost-complex structures, where one can find the subspace of $\sigma$-anti-invariant ($\sigma_{*} \circ J = -J \circ \sigma_*$) structures, denoted by $\rr\calj_{\omega}$.
We have already seen that $C_{A}$ intersects $L$ by a simple closed curve in the integrable case, but we wish to see this for an arbitrary $J \in \rr\calj_{\omega}$. 
Since $\sigma$ is anti-holomorphic, it must send the curve $C_{A}$ to another $J$-curve in class $A$. 
Since both $C_{A}$ and $\sigma(C_{A})$ pass through the point $p$, they must coincide.
As such, the restriction of $\sigma \colon C_{A} \to C_{A}$ is an anti-holomorphic involution of $C_{A}$ that has a fixed point; it has, therefore, a fixed smooth circle, which exhausts the fixed points. We conclude that 
for every $J \in \rr\calj_{\omega}$ the sphere $C_{A}$ in class $A$ intersects $L$ by a closed simple curve $\gamma_{A}$.
The class $[\gamma_{A}] \in \sfh_1(L;\zz)$ does not depend 
on $J$, for the space $\rr\calj_{\omega}$ is connected 
(see Lemma \ref{lem:almost-complex} below.)
Nor it depends on the isotopy class of $\omega$, as long as
our isotopy is $\sigma$-equivariant: 
given two forms at the same connected component of $\rr\Omega_{K}$, 
we use Moser's trick to obtain a family of $\sigma$-equivariant diffeomorphisms between them, thus identifying the corresponding spaces of almost-complex structures, spaces of holomorphic spheres etc.
\begin{lem}\label{lem:almost-complex}
Let $(X,\sigma,\omega)$ be a real symplectic manifold, and let 
$\rr \calj_{\omega}$ be the space of $\omega$-compatible almost-complex structures 
which are anti-invariant under the anti-symplectic involution. 
The space $\rr \calj_{\omega}$ is non-empty and connected, and in 
fact it is contractible by Proposition 1.1 in \cite{W}. 
\end{lem}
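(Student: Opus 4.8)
The plan is to deduce this from two standard ingredients: the classical fact that for any symplectic manifold the space of all compatible almost-complex structures is non-empty and contractible, and the naturality under diffeomorphisms of the construction that turns a Riemannian metric into a compatible $J$. Recall that construction: given a metric $g$, write $\omega(\,\cdot\,,\,\cdot\,)=g(A\,\cdot\,,\,\cdot\,)$ with $A$ invertible and $g$-skew-adjoint, and set $J_g:=A\,(\sqrt{A^{*}A}\,)^{-1}$, the orthogonal factor of the polar decomposition of $A$; then $J_g$ is $\omega$-compatible and depends continuously on $g$.

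For the non-emptiness I would first average an arbitrary Riemannian metric over the two-element group $\{\id,\sigma\}$ to obtain a $\sigma$-invariant metric $g$. The assignment $g\mapsto J_g$ is natural — for any diffeomorphism $\phi$ the structure built from $(\phi^{*}\omega,\phi^{*}g)$ equals $\phi^{*}J_g$ — and replacing $\omega$ by $-\omega$ replaces $A$ by $-A$, hence $J_g$ by $-J_g$. Applying this with $\phi=\sigma$, together with $\sigma^{*}\omega=-\omega$ and $\sigma^{*}g=g$, yields $(d\sigma)^{-1}\!\circ J_g\circ d\sigma=-J_g$, i.e. $\sigma_{*}\circ J_g=-J_g\circ\sigma_{*}$, so $J_g\in\rr\calj_\omega$ and the space is non-empty.

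For connectedness I would note conversely that every $J\in\rr\calj_\omega$ comes from a $\sigma$-invariant metric: $g_J:=\omega(\,\cdot\,,J\,\cdot\,)$ is Riemannian because $J$ is $\omega$-compatible, a one-line computation with the anti-invariance relation shows $\sigma^{*}g_J=g_J$, and a direct check gives $J_{g_J}=J$. Thus $g\mapsto J_g$ is a continuous surjection from the space of $\sigma$-invariant metrics onto $\rr\calj_\omega$; since that space of metrics is a non-empty convex subset of the vector space of symmetric $2$-tensors it is connected, and hence so is $\rr\calj_\omega$.

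The contractibility is the one part I would not reprove from scratch. The standard picture is to realize $\rr\calj_\omega$ as a space of sections of a fibre bundle (over $X/\sigma$): away from the fixed locus $L$ the fibre over a point $x$ is the usual contractible space of $\omega_x$-compatible linear complex structures on $T_xX$, while over a point of $L$, where $T_xL$ is a Lagrangian subspace of $(T_xX,\omega_x)$, the fibre is the set of compatible complex structures making $T_xL$ totally real; as a section space of a bundle with contractible fibres, $\rr\calj_\omega$ is then contractible. The main obstacle is exactly the contractibility of this last fibre — a purely linear-algebra statement about compatible complex structures adapted to a fixed Lagrangian — and rather than carry it out I would simply invoke Proposition~1.1 of \cite{W}, which gives contractibility of $\rr\calj_\omega$ directly. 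Only the non-emptiness and connectedness above are actually needed in the sequel.
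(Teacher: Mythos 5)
Your proof is correct and follows essentially the same route as the paper: both construct the retraction $g \mapsto J_g$ from the convex space of $\sigma$-invariant metrics onto $\rr\calj_{\omega}$ via the polar decomposition of $A_g$, check that $J_{g_J}=J$, and defer contractibility to Proposition 1.1 of \cite{W}. Your naturality argument for the anti-commutation $\sigma_*\circ J_g=-J_g\circ\sigma_*$ and the explicit averaging step are just slicker packagings of the paper's direct computation.
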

\begin{proof}
Let $\calr_{\sigma}$ be the space of Riemannian metrics on 
$X$ which are invariant under the anti-symplectic involution. 
Clearly, the space $\calr_{\sigma}$ is convex and hence, contractible.
There is a natural embedding 
$i \colon \rr \calj_{\omega} \to \calr_{\sigma}$ defined by
\[
J \xrightarrow{i} \omega(\cdot,J \cdot)\quad \text{for $J \in \rr \calj_{\omega}$.}
\]
We will prove that $\rr \calj_{\omega}$ is a retract of $\calr_{\sigma}$. 
Since $\calr_{\sigma}$ is connected, this would imply that $\rr \calj_{\omega}$ 
is connected too.
For every $g \in \calr_{\sigma}$ there is a unique field of 
endomorphisms $A_g$ of $T_{X}$ such that 
\[
\omega(\cdot,\cdot) = g(A_g \cdot, \cdot).
\]
Since $\omega$ is $\sigma$-anti-invariant and $g$ is $\sigma$-invariant, 
it follows that $A_g$ and $\sigma$ anti-commute, i.e. 
\begin{equation}\label{eq:ag-sigma}
A_g \circ \sigma_{*} = -\sigma_{*} \circ A_g.
\end{equation}
Furthermore, as $\omega$ is skew-symmetric, so is $A_g$, i.e. 
$A_g^t = - A_g$, where $A_g^t$ is the adjoint for $A_g$ w.r.t.\,$g$. 
Set
\begin{equation}\label{eq:jg}
J_g := (-A^2_g)^{-\frac{1}{2}} A_g,
\end{equation}
where the square root in the right-hand side of 
\eqref{eq:jg} is well defined because 
$(-A_g^2)$ is $g$-self-adjoint and positive-definite. In fact, 
self-adjoint positive operators have a unique self-adjoint positive square 
root, and this is the root we pick for $(-A_g^2)$. Also, since 
$(-A_g^2)$ and $\sigma$ commute, it follows that 
\begin{equation}\label{eq:ag12-sigma}
(-A_g^2)^{\frac{1}{2}} \circ \sigma_* =  \sigma_* \circ (-A_g^2)^{\frac{1}{2}}\quad 
\text{and}\quad 
(-A_g^2)^{-\frac{1}{2}} \circ \sigma_* =  \sigma_* \circ (-A_g^2)^{-\frac{1}{2}}.
\end{equation}
Here we have used that $\sigma$ is an involution. 
Using \eqref{eq:ag-sigma} and \eqref{eq:ag12-sigma}, 
we see that $J_g$ and $\sigma$ 
anti-commute. Moreover, $J_g$ satisfies 
\begin{equation}\label{eq:jg2-id}
J_g^2 = -\id.
\end{equation}
To prove \eqref{eq:jg2-id}, it suffices to check that
\begin{equation}\label{eq:ag-sqrtag-commute}
(-A_g^2)^{-\frac{1}{2}} \circ A_g = A_g \circ (-A_g^2)^{-\frac{1}{2}}.
\end{equation}
One should pass to the complexification of 
$T_X$ to see this. Also, using \eqref{eq:ag-sqrtag-commute}, 
it is straightforward to check that $J_g$ is $\omega$-compatible. 
We conclude that $J_g \in \rr \calj_{\omega}$.
Define $u \colon \calr_{\sigma} \to \rr \calj_{\omega}$ as
\[
u(g) := J_g.
\]
It is 
easy to see that $u \circ i = \id$, so $u$ is a retraction. 
\end{proof}
\medskip%

\noindent
To prove our theorem, we shall construct a sequence of forms $\omega_{k} \in \rr\Omega_{K}$ whose invariants $[\gamma_{A}^k] \in \sfh_1(L;\zz)$ are pairwise distinct. To this end, we find a diffeomorphism $f \colon X_1 \to X_1$ such that:
\begin{enumerate}
    \item We wish $f$ to satisfy $f \circ \sigma = \sigma \circ f$,
    so that $f_{*} \omega$ would be anti-invariant. The condition also 
    implies that $f$ keeps $L$ invariant.
    
    \item The restriction of $f$ to $L$ is smoothly isotopic to 
    (a power of) the Dehn twist along the curve $\gamma_B$, so, in particular, we would have
    $f_{*}[\gamma_{A}] = [\gamma_{A}] + m\,[\gamma_{B}] \neq [\gamma_{A}]$
    in $\sfh_1(L;\zz) \cong \zz^2$ for some $m \in \zz$. 
    
    \item And, finally, we wish $f_{*} \colon \sfh_2(X_1;\zz) \to \sfh_2(X_1;\zz)$ to be the identity isomorphism. This is necessary for using the invariant $[\gamma_{A}]$, as it was defined with the class $A$. 
\end{enumerate}
Once $f$ is found, it can be used to obtain 
infinitely many non-isotopic forms $\omega_k$ by setting $\omega_{k} := f^{k}_{*} \omega$. This finishes the proof. 
\smallskip%

\noindent
To construct $f$, we first do it on the subset
\[
\cp^1(z) \times K_{\varepsilon} \subset X,\quad 
K_{\varepsilon} = \left\{ w \in \cp^1(w)\, |\, 1 - \varepsilon < |w| < 1 + \varepsilon \right\}
\]
with the formula
\[
f(z,w) := (z\, e^{m\,\arg(w)}, w)\quad \text{\normalfont{for some $m \in \zz$}}, 
\]
which can be seen as a mapping from $K_{\varepsilon}$ to $\diff_{+}(\cp^1(z))$, the group of orientation-preserving diffeomorphisms 
of $\cp^1(z)$.
We ask whether or not this mapping can be extended to a mapping defined over the whole sphere $\cp^1(w)$. 
When $m$ is even, the answer is yes since: $K_{\varepsilon}$ is homotopy equivalent to $S^1$ and $\pi_1(\diff_{+}(S^2)) = \zz_2$.
We thus constructed $f$ as an orientation-preserving 
fiberwise diffeomorphism of the fibration 
$\cp^1(z) \times \cp^1(w) \to \cp^1(w)$.

Whether $f$ is smoothly isotopic to the identity? 
If we do not impose the condition $\sigma \circ f = f \circ \sigma$ 
on the isotopy, then the answer is yes since: $\pi_2(\diff_{+}(S^2)) = 0$. Therefore, all the forms $f_{*}^{k}\omega$ are in the same connected component of $\Omega_{K}$ but not of $\rr\Omega_{K}$.

\section{Proof of Theorem \ref{thm2}}
We employ a similar model for $X_2$. 
Namely, $X_2$ is again the product $\cp^1(z) \times \cp^1(w)$, but now $\sigma$ is given by 
\begin{equation}\label{sigma2}
\sigma(z,w) = \left( -\bar{z}^{-1}, -\bar{w}^{-1} \right).
\end{equation}
Just like in \eqref{sigma1}, the action of $\sigma$ is componentwise, but 
in contrast to \eqref{sigma1}, the new involution has no fixed points.
Using Cartesian coordinates $(\boldsymbol{x},\boldsymbol{y})$ 
on $S^2 \times S^2 \subset \rr^3 \times \rr^3$, we describe $\sigma$ as
\[
\sigma(\mib{x},\mib{y}) = (-\mib{x},-\mib{y}).
\] 
Consider the diffeomorphism $f \colon X_2 \to X_2$ given by
\[
f(\mib{x},\mib{y}) := ( -\mib{x} + 
2\,\langle \mib{x},\mib{y} \rangle \,\mib{y}, \mib{y} ).
\]
Here $\langle \mib{x},\mib{y} \rangle$ stands 
for the Euclidean inner product of $\mib{x}$ and 
$\mib{y}$.
This map does nothing on the second sphere and does the reflection of the first sphere with the respect to the axis passing through the antipodal points $\mib{y}$ 
and $-\mib{y}$. 
As $f$ and $\sigma$ commute, we obtain a 
descendant self-mapping $g$ of $Z := X_2/\sigma$ and the commutative diagram
\begin{equation}\label{onlydiag}
\begin{CD}
X_2 @>{f}>>X_2 \\
@VV{p}V @VV{p}V \\
Z @>{g}>> Z,
\end{CD}
\end{equation}
where $p \colon X \to Z$ is the covering map, which identifies $(\mib{x},\mib{y})$ 
with $(-\mib{x},-\mib{y})$. 
Geometrically, $Z \cong \Gr(2,4)$, the Grassmannian of two-planes in $\rr^4$, and $X_2 \cong S^2 \times S^2$ is the corresponding Grassmannian of oriented planes.

It is interesting to look at the algebraic properties $g$. 
Taking $\pi_k$ and $\sfh_k$ of \eqref{onlydiag}, we obtain commutative diagrams of abelian groups. 
Since $f$ is homotopic to the identity (not equivariantly!), we have
\begin{equation}\label{eq:groupsf}
f_{*} = \id \colon \pi_{k}(X_2) \to \pi_{k}(X_2)\quad \text{and}\quad 
f_{*} = \id \colon \sfh_{k}(X_2) \to \sfh_{k}(X_2)\qquad
\text{for all $k$.}
\end{equation} 
Since $p \colon X_2 \to Z$ is a connected covering we also have isomorphisms
\begin{equation}\label{eq:groupsp}
p_{*} \colon \pi_{k}(X_2) \to \pi_{k}(Z)\quad 
\text{for $k \neq 1$.}
\end{equation}
From \eqref{eq:groupsf} and \eqref{eq:groupsp} we obtain
\begin{equation}\label{eq:groupsg}
g_{*} = \id \colon \pi_{k}(Z) \to \pi_{k}(Z)\quad 
\text{for all $k \neq 1$.}
\end{equation}
To prove that $g_{*} = \id$ for $k = 1$, observe that 
$\pi_1(Z) = \zz_2$ and that the only automorphism of $\zz_2$ is the identity.
\smallskip%

\noindent
Recall that 
\begin{equation}\label{eq:homZ2}
\sfh_0(Z;\zz_2) = \zz_2,\quad 
\sfh_1(Z;\zz_2) = \zz_2,\quad 
\sfh_2(Z;\zz_2) = \zz_2 \oplus \zz_2,\quad 
\sfh_3(Z;\zz_2) = \zz_2,\quad 
\sfh_4(Z;\zz_2) = \zz_2.
\end{equation}
Here the first and the last equality follow from the connectivity of $Z$. 
The group $\sfh_1$ is easy to recover since we know that $\pi_1(Z) = \zz_2$, whereas 
the group $\sfh_3$ is recovered via Poincar\'e duality. What is left to compute is $\sfh_2$. Recall that the Schubert cell decomposition of $\Gr(2,4)$ consists 
of one 0-cell, one 1-cell, two 2-cells, one 3-cell, and of a single 4-cell. 
Thus, the dimension of $\sfh_2$ is not greater than 2. 
To show that $\sfh_2$ is exactly $\zz_2 \oplus \zz_2$, we 
explicitly describe two non-homologous cycles in $Z$.

Let us introduce the diagonal sphere 
$\Delta := 
\left\{ (\mib{x},\mib{y}) \in S^2 \times S^2\,|\, \mib{x} = \mib{y} \right\}$. The sphere $\Delta$ is invariant 
w.r.t $\sigma$, as 
$\sigma(\mib{x},\mib{x}) = (-\mib{x},-\mib{x})$. 
Let $Q$ denote $p(\Delta)$, which is an embedded 
$\rp^2$ in $Z$. The map 
\[
s \colon X_2 \to \Delta,\quad s(\mib{x},\mib{y}) := (\mib{x},\mib{x})
\]
fits into the diagram 
\begin{equation*}
\begin{CD}
X_2 @>{s}>>\Delta \\
@VV{\sigma}V @VV{\sigma}V \\
X_2 @>{s}>> \Delta,
\end{CD}
\end{equation*}
and hence induces a map $Z \to Q$ which 
we denote by the same letter $s$. Note that 
$s \colon Z \to Q$ is a fiber bundle over $Q$. 
This bundle has a tautological section, 
given by $Q$ itself. Let $F$ be any fiber of $s$. 
We claim that the classes $[F], [Q] \in \sfh_2(Z;\zz_2)$ 
are non-zero and are not equal to each 
other. Indeed, since $Q$, being a section, 
intersects $F$ at exactly one point, we have
\[
[F] \cdot [Q] = 1.
\]
This implies that both $F$ and $Q$ are not homologically trivial. They are also 
not homologous to each other, as for if they were that would imply the equality 
$[F] \cdot [Q] = [F]^2$. However, the cycle $F$, being a fiber, must have self-intersection number 0.

We claim that 
$g_{*} = \id \colon \sfh_k(Z;\zz_2) \to \sfh_k(Z;\zz_2)$ 
for all $k$. This is obvious for $k \neq 2$ for dimension reasons. To prove that for $k = 2$, observe that $g$ and $s$ commute and that $g$ keeps $Q$ fixed. 
Hence, we must have $g_{*} [F] = [F]$, 
$g_{*} [Q] = [Q]$. 

A slightly more subtle computation reveals:
\begin{equation}
\sfh_0(Z;\zz) = \zz,\quad 
\sfh_1(Z;\zz) = \zz_2,\quad 
\sfh_2(Z;\zz) = \zz_2,\quad 
\sfh_3(Z;\zz) = 0,\quad 
\sfh_4(Z;\zz) = \zz,
\end{equation}
with $\sfh_2(Z;\zz)$ generated by $[F]$. 
Clearly, the action of $g$ on 
$\sfh_k(Z;\zz)$ is also trivial for all $k$.

We conclude that $g$ induces the identity morphisms on all homology and homotopy groups. Nevertheless, we claim that $g \colon Z \to Z$ is not homotopic to the identity.
To see this we need to introduce the mapping torus 
\[
\sf{T}_{\!g} := Z \times [0,1] / \left\{ (z,1)\sim(g(z),0) \right\}.
\]
We will show now that the Stiefel-Whitney class 
$w_3(\sf{T}_{\!g})$ doesn't vanish; by contrast, the mapping torus $\sf{T}_{\!\id}$ has vanishing $w_3$. Therefore, $g$ is not homotopic to the identity.
\begin{lem}
$w_3(\sf{T}_{\!g}) \neq 0.$
\end{lem}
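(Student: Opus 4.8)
The plan is to detect $w_3(\sf{T}_{\!g})$ by pulling it back to a carefully chosen $3$-dimensional submanifold of $\sf{T}_{\!g}$, on which it will turn out to be the generator of an $\sfh^3\cong\zz_2$. First I would record two facts about $g$ not yet stated in the text. Since $\mib{x}\mapsto-\mib{x}+2\langle\mib{x},\mib{y}\rangle\mib{y}$ is the reflection of $S^2$ across the axis through $\pm\mib{y}$, we have $f^2=\id$, hence $g$ is an involution of $Z$; and a direct check shows that its fixed-point set is exactly $Q\sqcup Q'$, with $Q=p(\Delta)\cong\rp^2$ and $Q'\cong\rp^2$ the image of the anti-diagonal. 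In particular $g$ restricts to the identity on $Q$, so $\sf{T}_{\!g}$ contains the $3$-manifold $Q\times S^1$ (the part of the mapping torus lying over the base circle on which $g$ acts trivially), and by naturality of Stiefel--Whitney classes it suffices to prove $w_3\bigl(T\sf{T}_{\!g}|_{Q\times S^1}\bigr)\neq 0$ in $\sfh^3(Q\times S^1;\zz_2)\cong\zz_2$.

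The heart of the argument is to identify $T\sf{T}_{\!g}|_{Q\times S^1}=T(Q\times S^1)\oplus\nu$, where $\nu$ is the rank-$2$ normal bundle. By construction $\nu$ is the mapping torus over the circle of the automorphism $dg$ of the normal bundle $\nu(Q\subset Z)$; and since $g$ is an involution and $Q$ is a component of its fixed locus, $dg$ acts on $\nu(Q\subset Z)$ as $-\id$. Next I would identify $\nu(Q\subset Z)$ itself: the normal bundle of the diagonal $\Delta\subset X_2$ is canonically $TS^2$, on which $\sigma$ acts as the differential of the antipodal map, so dividing by $\sigma$ gives $\nu(Q\subset Z)\cong TS^2/d(\mathrm{antipodal})\cong T\rp^2$. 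Finally, the mapping torus over $S^1$ of $(T\rp^2,-\id)$ is $p_1^{*}T\rp^2\otimes p_2^{*}M$, where $M\to S^1$ is the non-trivial real line bundle and $p_1,p_2$ are the projections of $Q\times S^1$.

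With these identifications the computation is immediate. Writing $a$ for the generator of $\sfh^1(Q;\zz_2)$ and $t$ for that of $\sfh^1(S^1;\zz_2)$, one has $w(T(Q\times S^1))=w(T\rp^2)=1+a+a^2$ and, by the tensor-product formula, $w(\nu)=1+a+(a^2+at)$ (using $a^3=0$ and $t^2=0$), whence
\[
w\bigl(T\sf{T}_{\!g}|_{Q\times S^1}\bigr)=(1+a+a^2)(1+a+a^2+at)=1+a^2+at+a^2t .
\]
Reading off degree $3$ gives $w_3\bigl(T\sf{T}_{\!g}|_{Q\times S^1}\bigr)=a^2t$, the non-zero element of $\sfh^3(Q\times S^1;\zz_2)$, and therefore $w_3(\sf{T}_{\!g})\neq 0$.

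The one place that calls for care — and what I expect to be the main obstacle to a clean write-up — is the pair of bundle identifications above: that $dg$ acts on the normal bundle of $Q$ as precisely $-\id$ (which follows from $g^2=\id$ and the fact that the $(+1)$-eigenspace of $dg_q$ is exactly $T_qQ$, so the clutching map is the central automorphism $-\id$, not merely something fibrewise conjugate to it), and that the $\sigma$-equivariant isomorphism $\nu(\Delta\subset X_2)\cong TS^2$ really descends to $\nu(Q\subset Z)\cong T\rp^2$. Everything after that is routine; note that the class $w_3$ above is produced entirely by the Möbius twist in $\nu$, which is absent for the trivial mapping torus, consistent with the vanishing of $w_3(\sf{T}_{\!\id})$ recorded above.
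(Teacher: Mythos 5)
Your proposal is correct and follows essentially the same route as the paper: restrict to the submanifold $Q\times S^1\subset \sf{T}_{\!g}$, identify the normal bundle as the mapping torus of $-\id$ on $N_{Q/Z}\cong T\rp^2$, i.e.\ as $T_{Q}\otimes L_{S^1}$, and compute $w_3=w_1(Q)^2w_1(L_{S^1})=a^2t\neq 0$. The only cosmetic differences are that you derive $\dd g=-\id$ on the normal bundle from $g$ being an involution rather than from the explicit formula, and you compute with the total Stiefel--Whitney class instead of applying the Whitney sum formula directly in degree $3$; both yield the same answer.
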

\begin{proof}
Remember that $g|_{Q} = \id$. Therefore, there is 
a natural embedding
\[
Q \times S^1 = Q \times [0,1]/\left\{ (q,1) \sim (g(q),0) \right\} \quad
\text{into $\sf{T}_{\!g}$.}
\]
We refer to this embedded copy of $Q \times S^1$ as $\calr$.
It suffices to prove that the restriction of 
$w_3(\sf{T}_{\!g})$ on $\calr$ does not vanish.
Let us look at the restriction of the tangent bundle to 
$\sf{T}_{\!g}$ on $\calr$. It splits as 
\[
N_{\calr/\sf{T}_{\!g}} \oplus T_{\calr},
\]
where $N_{\calr/\sf{T}_{\!g}}$ is the normal bundle to $\calr$ in $\sf{T}_{\!g}$ and $T_{\calr}$ is the tangent bundle to $\calr$. 
As $\calr = Q \times S^1$, we have
\begin{equation}\label{eq:calTr}
T_{\calr} = T_{Q} \oplus \underline{\rr},
\end{equation}
where $T_{Q}$ is the pull-back of the 
tangent bundle to $Q$ under the projection $\calr \to Q$, 
and $\underline{\rr}$ is a trivial line bundle. 

Now, we will prove the formula
\begin{equation}\label{eq:NR}
N_{\calr/\sf{T}_{\!g}} = N_{Q/Z} \otimes L_{S^1}.
\end{equation}
Here $L_{S^1}$ is the non-trivial (non-orientable) 
line bundle over $S^1$, and 
$N_{Q/Z}$ is the normal bundle to $Q$ in $Z$. 
In the right-hand side of the formula both 
factors are interpreted as the pull-backs w.r.t. the 
projections $\calr \to Q$ and $\calr \to S^1$.

Observe that, although $g|_{Q} = \id$, the derivative 
$\text{d}\,g \colon N_{Q/Z} \to N_{Q/Z}$ is not the identity but
\begin{equation*}
\text{d}\,g = -\id \colon N_{Q/Z} \to N_{Q/Z}.
\end{equation*}
Now \eqref{eq:NR} follows from a general fact. 
Suppose we have a topological space $X$ with a
vector bundle $I \colon E \to X$, and 
that we form 
a twisted bundle $I^{\circlearrowleft}$ 
over $X \times S^1$ with the total space
\[
E \times [0,1]/\left\{ 
(\mib{e},x,1) \sim (-\mib{e},x,0)
\right\},
\]
where a triple $(\mib{e},x,1)$ consists of 
$x \in X \times \left\{ 1 \right\}$ and 
$\mib{e} \in I^{-1}(x)$, 
and likewise 
for $(\mib{e},x,0)$.
Then it is easy to see that 
\[
I^{\circlearrowleft} = I \otimes L_{S^1},
\]
and as $N_{\calr/\sf{T}_{\!g}} = N_{Q/Z}^{\circlearrowleft}$, 
formula \eqref{eq:NR} follows.

Now, we will establish an isomorphism
\begin{equation}\label{eq:tautau}
N_{Q/Z} \cong T_{Q}.
\end{equation}
This is again a general fact. Suppose we are given 
a manifold $Y$ and a free involution $\tau \colon Y \to Y$. 
Let us consider the diagonal 
$\Delta = \left\{ (y_1,y_2) \in Y \times Y\,|\,y_1 = y_2 \right\}$. Set 
\[
Z := Y \times Y/\sim,\quad 
Q := \Delta/\sim,\qquad\text{where $(y_1,y_2) \sim (\tau(y_1),\tau(y_2))$.}
\]
There is a canonical way 
(there are two, choose either) to identify 
$T_{\Delta}$ with the normal bundle to $\Delta$ in 
$Y \times Y$. This identification is 
equivariant w.r.t. the involution 
$\tau \times \tau \colon Y \times Y \to Y \times Y$, and hence 
gives rise to an identification between 
$T_{Q}$ and $N_{Q/Z}$.

Combining \eqref{eq:calTr} with 
\eqref{eq:NR} and \eqref{eq:tautau} yields
\begin{equation}\label{eq:fatbundle}
N_{\calr/\sf{T}_{\!g}} \oplus T_{\calr} \cong 
T_{Q} \oplus \underline{\rr} \oplus (T_{Q} \otimes L_{S^1}).
\end{equation}
We wish to compute the class $w_3$ of this bundle. 
To this end, we apply the Whitney sum formula to obtain 
\begin{equation}\label{eq:whitney_sum}
w_3( T_{Q} \oplus (T_{Q} \otimes L_{S^1}) ) = 
w_1(Q) w_2(T_{Q} \otimes L_{S^1}) + 
w_2(Q) w_1(T_{Q} \otimes L_{S^1}),
\end{equation}
where $w_i(Q) = w_i(T_{Q})$ are the Stiefel-Whitney 
classes $Q$ (more precisely, their pull-backs to 
$\calr$ under the projection $\calr \to Q$), and 
$w_i(T_{Q} \otimes L_{S^1})$ are the classes of the bundle 
$T_{Q}$ twisted by $L_{S^1}$. In computing $w_3$ we have ignored the $\underline{\rr}$ component of \eqref{eq:fatbundle} because characteristic classes are invariant under taking the sum with a trivial bundle.

In general, if $L$ is a line bundle and 
$E$ is an arbitrary vector bundle, then there are closed 
expressions for both the first and the top classes of 
$E \otimes I$ in terms of $w_i(E)$ and $w_i(L)$. They are as follows:
\begin{equation}\label{eq:w_relations}
w_n(E \otimes L) = w_n(E) + 
w_{n-1}(E) w_1(L) + w_{n-2}(E) w_1(L)^2 + \ldots,\quad w_1(E \otimes L) = w_1(E) + n\,w_1(L),
\end{equation}
where $n = \rk\,E$. 
The proof of both formulas is a 
simple application of the splitting 
principle and is omitted. Since $\rk\,T_{Q} = 2$, we get
\begin{equation}\label{eq:w_QL}
w_2(T_{Q} \otimes L) = w_2(Q) + w_1(Q) w_1(L_{S^1}),\quad 
w_1(T_{Q} \otimes L) = w_1(Q).
\end{equation}
Since $\dim\,Q = 2$, we have $w_1(Q)w_2(Q) = 0$. 
Substituting \eqref{eq:w_QL} into 
\eqref{eq:whitney_sum} we obtain $w_1(Q)^2 w_1(L_{S^1})$. 
Recall that $Q \cong \rp^2$ and $\calr \cong Q \times S^1$. 
The class $w_1(\rp^2)^2$ generates 
$\sfh^2(\rp^2;\zz_2)$, while the class $w_1(L_{S^1})$ generates 
$\sfh^1(S^1;\zz_2)$. Clearly, their product does not vanish.
\end{proof}
\begin{lem}
$w_3(\sf{T}_{\!\id}) = 0.$
\end{lem}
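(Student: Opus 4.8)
The plan is to observe first that the mapping torus of the identity is just a product, $\sf{T}_{\!\id} = Z \times S^1$. Its tangent bundle is $\pi^{*}T_{Z} \oplus \underline{\rr}$, where $\pi \colon Z \times S^1 \to Z$ is the projection and the trivial summand is the tangent bundle of the parallelizable circle; hence by the Whitney sum formula $w(\sf{T}_{\!\id}) = \pi^{*}w(Z)$, and in particular $w_3(\sf{T}_{\!\id}) = \pi^{*}w_3(Z)$. So everything reduces to proving $w_3(Z) = 0$. Note that this is \emph{not} automatic for dimension reasons: by the K\"unneth formula $\sfh^3(\sf{T}_{\!\id};\zz_2) \cong \sfh^3(Z;\zz_2) \oplus \sfh^2(Z;\zz_2) \cong (\zz_2)^3$, which is exactly why the nonvanishing of $w_3(\sf{T}_{\!g})$ established in the previous lemma is not in conflict with this one.

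Next I would compute the Stiefel--Whitney class of $Z \cong \Gr(2,4)$ from the tautological bundle. Let $\gamma$ be the tautological $2$-plane bundle over $\Gr(2,4)$ and $\gamma^{\perp}$ its orthogonal complement, so that $\gamma \oplus \gamma^{\perp}$ is trivial of rank $4$ and $T_{Z} \cong \hom(\gamma,\gamma^{\perp}) \cong \gamma \otimes \gamma^{\perp}$. Writing $w(\gamma) = 1 + \alpha + \beta$ with $\alpha = w_1(\gamma)$, which generates $\sfh^1(Z;\zz_2) = \zz_2$, the identity $w(\gamma)\,w(\gamma^{\perp}) = 1$ forces $w(\gamma^{\perp}) = 1 + \alpha + (\alpha^2 + \beta)$ and yields the defining relations of $\sfh^{*}(\Gr(2,4);\zz_2)$, among them $\alpha^3 = 0$. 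Applying the splitting principle to $\gamma \otimes \gamma^{\perp}$, whose Stiefel--Whitney roots are the four classes $x_i + y_j$ with $\sum x_i = \sum y_j = \alpha$, a short symmetric-function computation gives $w_1(Z) = 0$, $w_2(Z) = \alpha^2$, and $w_3(Z) = e_3(\{x_i+y_j\}) = 0$; the last vanishing falls out because, with $e_1 = 0$, the power sum $p_3 = \sum (x_i + y_j)^3$ equals $\alpha^3 + \alpha^3 = 0 \pmod 2$ while $p_3 \equiv e_3 \pmod 2$. Equivalently one can bypass the calculation: $w_2(Z) = \alpha^2 = \mathrm{Sq}^1 \alpha$, so $w_3(Z) = \mathrm{Sq}^1 w_2(Z) = \mathrm{Sq}^1 \mathrm{Sq}^1 \alpha = 0$; or simply note that $Z$, being an orientable closed $4$-manifold, is $\mathrm{Spin}^{\mathrm{c}}$, whence $w_3(Z) = 0$.

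Combining the two steps gives $w_3(\sf{T}_{\!\id}) = \pi^{*}w_3(Z) = 0$, which is the assertion. I do not expect a real obstacle here; the only point requiring care is the identification $T_{\Gr(2,4)} \cong \gamma \otimes \gamma^{\perp}$ and the bookkeeping of the relations in $\sfh^{*}(\Gr(2,4);\zz_2)$, so that one neither mistakenly concludes $w_2(Z) = 0$ (it is nonzero, equal to $\alpha^2$) nor overlooks that $w_3(Z)$ genuinely vanishes.
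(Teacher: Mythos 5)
Your proof is correct, and after the common first step it diverges from the paper's. Both arguments begin by writing $\sf{T}_{\!\id} \cong Z \times S^1$ and using the Whitney formula to reduce the claim to $w_3(Z)=0$ (your remark that this reduction is faithful, i.e.\ that $\pi^{*}$ is injective on $\sfh^3$ by K\"unneth, and that this is precisely why there is no tension with the previous lemma, is a worthwhile addition). From there the paper disposes of $w_3(Z)$ in one line by evaluating it against the fundamental class via an Euler-characteristic identity, $\chi(Z)=\chi(S^2\times S^2)/2=2\equiv 0 \bmod 2$ --- an identity which, as literally stated for $w_3$ of a $4$-manifold, is really the one for the top class $w_4$, so the paper's justification is at best terse. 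You instead compute the full Stiefel--Whitney class of $\Gr(2,4)$ from $T_{Z}\cong \hom(\gamma,\gamma^{\perp})$ and the relations $w(\gamma)w(\gamma^{\perp})=1$; your bookkeeping checks out ($w_1(Z)=0$, $w_2(Z)=\alpha^2$, $e_3=p_3=\alpha^3+\alpha^3=0$ via Newton's identities with $e_1=0$), and your two shortcuts --- $w_3=\mathrm{Sq}^1 w_2=\mathrm{Sq}^1\mathrm{Sq}^1\alpha=0$ by Wu's formula on the orientable $Z$, or the fact that a closed orientable $4$-manifold is $\mathrm{Spin}^{\mathrm{c}}$ so that $w_3=0$ --- are both legitimate and apply verbatim to any closed orientable $4$-manifold, not just this Grassmannian. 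Your route costs a page of symmetric-function algebra (or an appeal to Wu/$\mathrm{Spin}^{\mathrm{c}}$ machinery) but yields strictly more information, namely the whole of $w(Z)$ including the nonvanishing $w_2(Z)=\alpha^2$; the paper's route is shorter but rests on a formula that deserves the more careful treatment you give.
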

\begin{proof}
Since $\mathsf{T}_{\!\id} \cong Z \times S^1$, the Whitney formula yields
\[
w_3( \sf{T}_{\!\id} ) = w_3(Z).
\]
One of the definitions of $w_3$ says
\[
w_3(Z) \cdot [Z] = \chi(Z)\,\text{$\sf{mod}$}\,2.
\]
Since there is a double covering $S^2 \times S^2 \to Z$, it follows that
\[
\chi(Z) = \dfrac{\chi(S^2 \times S^2)}{2} = 2.
\]
\end{proof}
\remark
Suppose that $X$ is a simply-connected $4$-manifold.
Then a classical result of Quinn \cite{Quinn} states that a homeomorphsim $f \colon X \to X$ is homotopic to the identity iff it induces the identity morphism on homology. The theorem's generalization to the non-simply-connected case is unknown, not even for the simplest case of the group $\zz_2$.
Suppose that $Z$ is a closed $4$-manifold with $\pi_1(Z) \cong \zz_2$.
Of course, for a homeomorphism $g \colon Z \to Z$ to be homotopic to the identity, it is first necessary to have $g$ acting identically on all $\pi_{k}$ and $H_k$. However, this is not sufficient.
For example, a simple obstruction is that any self-homeomorphism of $Z$ that is homotopic to the identity must have a lift (to the universal cover) that is also homotopic to the identity, 
but even that in general is not sufficient, as the example above shows.
\smallskip%

\noindent
Thus, although $f$ is isotopic to the identity as a diffeomorphism 
of $X_2$, it is not isotopic to the identity within 
$\diff(X_2,\sigma)$.
\smallskip%

\begin{lem}
Suppose that 
$h \colon (X_2, \sigma,\omega) \to (X_2, \sigma,\omega)$ is a 
symplectomorphism such that 
$h \circ \sigma = \sigma \circ h$ and such that 
$h_{*} = \id \colon \sfh_2(X_2;\zz) \to \sfh_2(X_2;\zz)$. Then $h$ is isotopic to the identity 
within $\diff(X_2,\sigma)$.
\end{lem}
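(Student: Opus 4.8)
The plan is to run an equivariant version of Gromov's foliation argument and then to reduce the problem to the connectedness of $\diff(\rp^2)$. Fix the product complex structure $J_0 \in \rr\calj_\omega$ and set $J_1 := h_* J_0$. Since $h$ is a symplectomorphism, $J_1$ is $\omega$-compatible, and since $h \circ \sigma = \sigma \circ h$ while $J_0$ is $\sigma$-anti-invariant, so is $J_1$; hence $J_0, J_1 \in \rr\calj_\omega$. By \lemma{lem:almost-complex} the space $\rr\calj_\omega$ is connected, so I can join them by a path $(J_t)_{t \in [0,1]}$ inside $\rr\calj_\omega$. By \refthm{thm:gro}, each $J_t$ produces two transversal foliations $\calf^A_t, \calf^B_t$ of $X_2$ by $J_t$-holomorphic spheres in the classes $A$ and $B$, varying smoothly with $t$; then each projection $\Pi^\bullet_t \colon X_2 \to X_2/\calf^\bullet_t$ is a smooth $S^2$-bundle and $(\Pi^A_t, \Pi^B_t) \colon X_2 \to (X_2/\calf^A_t) \times (X_2/\calf^B_t)$ is a diffeomorphism. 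As $\sigma$ is $J_t$-anti-holomorphic and carries $A$-spheres to $A$-spheres (and $B$ to $B$), it descends to a fibrewise involution $\bar\sigma^\bullet_t$ of $X_2/\calf^\bullet_t$, and the projections $\Pi^\bullet_t$ are $\sigma$-equivariant. One checks each $\bar\sigma^\bullet_t$ is free: its fibrewise degree is constant, equal to $-1$ as at $t=0$ (where $\bar\sigma^\bullet_0$ is the antipodal map $\alpha$), and in a smooth family of degree-$(-1)$ involutions of $S^2$ the set of parameters at which the involution is free is open and closed; containing $t=0$, it is all of $[0,1]$. Thus every $\bar\sigma^\bullet_t$ is conjugate to $\alpha$.

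Next I would straighten this family of foliations equivariantly. The total space $\calb^\bullet := \bigsqcup_t (X_2/\calf^\bullet_t)$ is a smooth $S^2$-bundle over $[0,1]$ carrying a fibrewise free involution, so $\calb^\bullet/\bar\sigma^\bullet$ is an $\rp^2$-bundle over $[0,1]$, hence trivial because $[0,1]$ is contractible. A trivialisation lifts to a $\bar\sigma^\bullet$-equivariant trivialisation $\calb^\bullet \cong [0,1] \times S^2$, which I normalise to be the identity over $t=0$; this yields $\sigma$-equivariant diffeomorphisms $\rho^\bullet_t \colon X_2/\calf^\bullet_0 \to X_2/\calf^\bullet_t$, smooth in $t$, with $\rho^\bullet_0 = \id$. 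Then
\[
\psi_t := (\Pi^A_t, \Pi^B_t)^{-1} \circ (\rho^A_t \times \rho^B_t) \circ (\Pi^A_0, \Pi^B_0)
\]
is an isotopy in $\diff(X_2, \sigma)$ with $\psi_0 = \id$ and $\psi_t(\calf^\bullet_0) = \calf^\bullet_t$ for $\bullet \in \{A,B\}$.

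Finally, $g := \psi_1^{-1} \circ h$ lies in $\diff(X_2, \sigma)$, induces the identity on $\sfh_2(X_2;\zz)$ (as $h$ does, and $\psi_1$ is isotopic to the identity), and preserves both foliations $\calf^A_0, \calf^B_0$ (because $h$, being symplectic with $h_* = \id$, carries $\calf^\bullet_0$ to the $J_1$-foliation $\calf^\bullet_1$, and so does $\psi_1$). In the product coordinates $X_2 \cong (X_2/\calf^A_0) \times (X_2/\calf^B_0) \cong S^2 \times S^2$ — chosen so that each $\bar\sigma^\bullet_0$ becomes $\alpha$, hence $\sigma$ becomes $\alpha \times \alpha$ — the map $g$ preserves both product foliations, so $g = a \times b$ with $a, b$ commuting with $\alpha$; and $g_* = \id$ on $\sfh_2$ forces $a$ and $b$ to be orientation-preserving (they act on $\sfh_2(S^2 \times S^2;\zz)$ by their degrees). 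Each then descends to a diffeomorphism of $\rp^2$; since $\diff(\rp^2)$ is connected, these descents are isotopic to the identity, and lifting the isotopies (keeping the orientation-preserving lift) joins $a$ and $b$ to $\id_{S^2}$ through $\alpha$-equivariant diffeomorphisms. The corresponding product isotopy stays in $\diff(X_2, \sigma)$ and connects $g$ to $\id_{X_2}$; concatenating it with the path $t \mapsto \psi_t \circ g$, which runs from $g$ to $h$, connects $h$ to $\id$ inside $\diff(X_2, \sigma)$, as claimed.

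The step I expect to be the main obstacle is the equivariant straightening of the second paragraph: one must know that Gromov's foliations can be trivialised as a smooth family over $[0,1]$ compatibly with the $\sigma$-action. I reduce this to the triviality of an $\rp^2$-bundle over an interval, but it still requires verifying that the induced fibrewise involutions $\bar\sigma^\bullet_t$ are free. The remaining ingredients — the product splitting of $g$ and the connectedness of $\diff(\rp^2)$ — are routine.
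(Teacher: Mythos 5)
Your proof is correct and follows essentially the same route as the paper: connect $J_0$ to $h_*J_0$ inside $\rr\calj_{\omega}$, straighten the resulting family of Gromov foliations $\sigma$-equivariantly so that $h$ may be assumed to preserve both product foliations, split it as $h_1\times h_2$ with each factor orientation-preserving and commuting with the antipodal map, and conclude from the connectedness of $\diff_{+}(S^2,\tau)\cong\diff(\rp^2)$. The only real difference is that you spell out the equivariant straightening (freeness of the leaf-space involutions plus triviality of the induced $\rp^2$-bundle over $[0,1]$), a step the paper compresses into a citation of Gromov's 2.4.A$_1$.
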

\begin{proof}
Let $J_0$ be the underlying complex structure of the 
K{\"a}hler surface $X_2$. 
Since $h$ is a symplectomorphism 
and $J_0 \in \rr\calj_{\omega}$, it follows that $h_{*} J_0 \in \rr\calj_{\omega}$. 
Since $\rr\calj_{\omega}$ is connected, there is a path 
$J(t) \in \rr\calj_{\omega}$, $t \in [0,1]$ such that 
$J(0) = J_0$ and $J(1) = h_{*} J_0$.
Recall that every structure $J(t) \in \rr\calj_{\omega}$ gives rise to a pair 
$(\calf_{A}^{\,t},\calf_{B}^{\,t})$ of transversal fibrations 
of $X_2$ into $J(t)$-holomorphic spheres. Since $\sigma$ is anti-holomorphic for 
each $J(t)$, these fibrations are invariant 
w.r.t. $\sigma$ in the following sense: if $C$ is a fiber of $\calf^{\,t}_{A}$, 
then so is $\sigma(C)$, and likewise for $\calf^{\,t}_{B}$.
Following \cite[2.4.A$_1$]{Gr}, one constructs a path 
of diffeomorphisms $\alpha(t) \in \diff(X_2,\sigma)$ with $\alpha(0) = \id$ such 
that $\alpha(t)$ sends $(\calf_{A}^{\,0},\calf_{B}^{\,0})$ to 
$(\calf_{A}^{\,t},\calf_{B}^{\,t})$. Composing $h$ with $\alpha(t)$, we can assume 
henceforth that $h$ preserves $(\calf_{A}^{\,0},\calf_{B}^{\,0})$. 
This is a very restrictive condition implying 
that $h$ is componentwise in the sense that
\[
h = h_1 \times h_2 \colon S^2 \times S^2 \to S^2 \times S^2,
\]
where both diffeomorphisms 
$h_i \colon S^2 \to S^2$ must be orientation-preserving 
and must also commute with the antipodal map 
$\tau \colon S^2 \to S^2,\ \tau(\mib{x}) = -\mib{x}$. Here the condition $h_* = \id$ has been used twice, once 
to ensure $h$ does not interchange the fibrations $\calf_{A}^{\,0}$ and $\calf_{B}^{\,0}$, and 
once to ensure that $h$ preserves the natural complex orientation of 
their fibers. What is left is to show that the group 
\begin{equation}\label{product}
\diff_{+}(S^2,\tau) \times \diff_{+}(S^2,\tau),
\end{equation}
is connected. 
Here $\diff_{+}(S^2,\tau)$ stands for the subgroup 
of those diffeomorphisms of $S^2$ which are orientation-preserving and 
$\tau$-equivariant.

A more general statement is that $\diff_{+}(S^2,\tau)$ is homotopy 
equivalent to $\mib{SO}(3)$. 
Note that every element of $\diff(S^2,\tau)$ 
induces a self-diffeomorphism 
of 
$\rp^2 \cong S^2/\tau$, and we have a surjective 
homomorphism
\begin{equation}\label{eq:rp2}
\diff(S^2,\tau) \to \diff(\rp^2),
\end{equation}
which is not injective, since $\id \colon \rp^2 \to \rp^2$ has exactly 
two lifts, the trivial lift $\id$ and the non-trivial lift $\tau$. 
However, if we replace $\diff(S^2,\tau)$ by 
$\diff_{+}(S^2,\tau)$, then \eqref{eq:rp2} becomes an isomorphism.
One can prove that $\diff(\rp^2)$ is homotopy equivalent to $\mib{SO}(3)$ 
by following the steps of the proof of Smale's theorem (see \cite{Smale}), as 
in \cite[Appendix A]{H-P-W}. A shorter proof is as follows. 
It is a classical result that the $2$-sphere $S^2$ admits a unique complex structure. That is, the group of diffeomorphisms of $S^2$ acts 
transitively on the space of complex structures on $S^2$.
From here, it is easy to show that $\diff_{+}(S^2,\tau)$ acts transitively 
on the collection of $\tau$-anti-invariant complex structures on $S^2$, and 
that we have a fibration 
\[
\aut(\cp^1,\tau) \to \diff_{+}(S^2,\tau) \to \calj_{\tau},
\]
where $\aut(\cp^1,\tau)$ is the subgroup of those biholomorphisms 
of $\cp^2$ which are $\tau$-equivariant, and $\calj_{\tau}$ is \textit{a 
connected component} (there are two of them!) of the space of $\tau$-anti-invariant 
complex structures on $S^2$.
The space $\calj_{\tau}$ is contractible, 
while $\aut(\cp^1,\tau)$, being a Lie group, contracts on its maximal compact 
subgroup, which is $\mib{SO}(3)$. Hence, the group 
$\diff_{+}(S^2,\tau)$ itself contracts onto $\mib{SO}(3)$. 
\end{proof}
\smallskip%

We are now in a position to show $\omega$ and $f_{*} \omega$ are at different components of $\rr\Omega_{K}$. Suppose they are not. 
Then we could join them with a path of monotone 
anti-invariant forms. By Moser's trick, that would imply that 
$f$ is isotopic to a symplectomorphism within $\diff(X_2,\sigma)$, and that, 
by the previous lemma, would imply that $f$ is isotopic to the identity within
$\diff(X_2,\sigma)$. This is a contradiction.

\bibliographystyle{plain}
\bibliography{ref}

\begin{thebibliography}{10}

\bibitem{Gr}
M.~Gromov.
\newblock Pseudoholomorphic curves in symplectic manifolds.
\newblock {\em Inventiones mathematicae}, 82(2):307--347, 1985.

\bibitem{H-P-W}
R.~Hind, M.~Pinsonnault, and W.~Wu.
\newblock Symplectomorphism groups of non-compact manifolds, orbifold balls,
  and a space of lagrangians.
\newblock {\em J. Symp. Geom.}, 14(1):343--372, 2016.

\bibitem{Kh-Sh}
V.~Kharlamov and V.~Shevchishin.
\newblock Anti-symplectic involutions on rational symplectic 4-manifolds.
\newblock \url{https://arxiv.org/abs/2005.03142}.

\bibitem{Koll}
J.~Kollar.
\newblock Real {A}lgebraic {S}urfaces.
\newblock \url{https://arxiv.org/abs/alg-geom/9712003}.

\bibitem{McD-Sal}
D.~McDuff and D.~Salamon.
\newblock {\em {$J$}-holomorphic curves and symplectic topology}, volume~52 of
  {\em American Mathematical Society Colloquium Publications}.
\newblock American Mathematical Society, Providence, RI, second edition, 2012.

\bibitem{Mc-Taub}
C.~McMullen and C.~Taubes.
\newblock 4-manifolds with inequivalent symplectic forms and 3-manifolds with
  inequivalent fibrations.
\newblock {\em Math. Res. Lett.}, (6):681--696, 2003.

\bibitem{Quinn}
F.~Quinn.
\newblock Isotopy of 4-manifolds.
\newblock {\em J. Diff. Geom.}, 24(3):343--372, 1986.

\bibitem{Smale}
S.~Smale.
\newblock Diffeomorphisms of the $2$-sphere.
\newblock {\em Proc. Amer. Math. Soc.}, 10(4):267--285, 1959.

\bibitem{Sm}
I.~Smith.
\newblock On moduli spaces of symplectic forms.
\newblock {\em Math. Res. Lett.}, (7):779--788, 2000.

\bibitem{Vid}
S.~Vidussi.
\newblock Homotopy {K3}'s with several symplectic structures.
\newblock {\em Geom. Topol.}, 5(1):267--285, 2001.

\bibitem{W}
J.~Welschinger.
\newblock Invariants of real symplectic 4-manifolds and lower bounds in real
  enumerative geometry.
\newblock {\em Inventiones mathematicae}, 162(1):195--234, 2005.

\end{thebibliography}

\end{document}